\newtheorem{cor}{Corollary}[section]
\newtheorem{te}[cor]{Theorem}
\newtheorem{p}[cor]{Proposition}
\newtheorem{quest}[cor]{Question}
\theoremstyle{definition}
\newtheorem{de}[cor]{Definition}
\theoremstyle{remark}
\newtheorem{ob}[cor]{Observation}
\newtheorem{ex}[cor]{Example}
\newcommand{\cz}{\mathbb{C}}
\newcommand{\nz}{\mathbb{N}}
\newcommand{\zz}{\mathbb{Z}}
\newcommand{\rz}{\mathbb{R}}
\newcommand{\nr}{\mathcal{N}}
\newcommand{\unit}{\mathcal{U}}
\newcommand{\vp}{\varphi}
\newcommand{\ve}{\varepsilon}
\begin{document}

\title{Unitaries in ultraproduct of matrices}

\author{Liviu P\u aunescu}
\address[L. P\u aunescu]{Institute of Mathematics of the Romanian Academy, 21 Calea Grivitei Street, 010702 Bucharest, Romania}
\email{liviu.paunescu@imar.ro}
\thanks{This work was partially supported by a grant of the Romanian National Authority for Scientific Research and Innovation, CNCS - UEFISCDI, project number PN-II-RU-TE-2014-4-0669. A part of this work was done while staying at the Erwin Schrödinger Institute for Mathematics and Physics under the Measured Group Theory program in 2016, supported by European Research Council (ERC) grant of Goulnara Arzhantseva, grant agreement no 259527}

\begin{abstract}
We study various conditions under which a unitary in an ultraproduct of matrices is conjugated to an ultraproduct of permutations.
\end{abstract}

\maketitle

We know that two unitary matrices $u,v\in M_n(\cz)$ are conjugated by a unitary if and only if they have the same eigenvalues, multiplicities included. Let $P_n\subset M_n$ be the subgroup of permutation matrices, i.e. matrices that have exactly one entry of one on each row and column. If $c\in P_n$ corresponds to a cycle of length $n$, then the eigenvalues of $c$ are precisely the $n$ roots of unity, each considered with multiplicity one. If $p\in P_n$ is a general permutation matrix, one has to consider its cycle decomposition. Then its set of eigenvalues is the disjoint union of the sets of eigenvalues corresponding to each cycle. All in all, describing unitary matrices that are conjugated to a permutation is easy and only requires a look at eigenvalues with multiplicity.

What about a unitary in the ultraproduct $u\in\Pi_{k\to\omega}M_{n_k}$? Assume that $u$ is of infinite order. It is easy to see that the spectrum of an element in $\Pi_{k\to\omega}P_{n_k}$ of infinite order is the whole circle $S^1=\{\lambda\in\cz:|\lambda|=1\}$. However this condition is not sufficient to deduce that $u$ is conjugated to an ultraproduct of permutations. The spectrum must also be equally distributed on the circle, in some sense. This can be formalised by introducing a measure on $S^1$, using the trace of spectral projections of $u$, see Section \ref{measure on spectrum}. Alternatively, we can study the momentums of this measure, i.e. the sequence $\big(Tr(u^k)\big)_{k\in\nz^*}$. This will be investigated in Section \ref{traces of powers}.

\section{Introduction}

We use the same notation as in \cite{Pa}. As mentioned $P_n\subset M_n(\cz)$ is the subgroup of permutation matrices inside the algebra of $n$-dimensional matrices. The \emph{trace norm} on $M_n(\cz)$ is defined as $||A||_2=\sqrt{\frac1n\sum_{i,j}|a_{ij}|^2}$, such that $||Id||_2=1$ independent of the dimension. This formula can be also expressed as $||A||_2=\sqrt{Tr(A^*A)}$, where $Tr(A)=\frac1n\sum_ia_{ii}$.

In order to construct the ultraproduct of matrices, fix $\omega$ a free ultrafilter on $\nz$, and fix $(n_k)_k\subset\nz^*$ a sequence such that $\lim_kn_k=\infty$. Then $\Pi_{k\to\omega}M_{n_k}=l^\infty(\nz,M_{n_k})/\nr_\omega$, where $l^\infty(\nz,M_{n_k})$ is the algebra of bounded sequences of matrices in the operatorial norm, and $\nr_\omega=\{(x_k)\in l^\infty(\nz,M_{n_k}): \lim_{k\to\omega}||x_k||_2=0\}$. Also $\Pi_{k\to\omega}P_{n_k}\subset\Pi_{k\to\omega}M_{n_k}$ is the subgroup of elements that have a representative obtained using only permutation matrices. If $x_k\in M_{n_k}$, then $\Pi_{k\to\omega}x_k$ denotes the corresponding element in $\Pi_{k\to\omega}M_{n_k}$.

Let $p\in P_n$. For $i\in\nz^*$ denote by $cyc_i(p)$ the number of points that are part of cycles of length $i$ in $p$ divided by $n$. One can immediately check the formulas: $\sum_{i\in\nz^*}cyc_i(p)=1$ and $Tr(p^n)=\sum_{i|n}cyc_i(p)$. We shall use these equalities extensively. 

For an element $p=\Pi_{k\to\omega}p_k\in\Pi_{k\to\omega}P_{n_k}$, the numbers $cyc_i(p)=\lim_kcyc_i(p_k)$ are well defined for any $i\in\nz^*$. This time $\sum_{i\in\nz^*}cyc_i(p)\leqslant 1$. Define $cyc_\infty(p)=1-\sum_{i\in\nz^*}cyc_i(p)$. These numbers were introduced in \cite{El-Sz}, where the following theorem have been proven.

\begin{p}(Proposition 2.3(4),\cite{El-Sz})
Two elements $p,q\in\Pi_{k\to\omega}P_{n_k}$ are conjugated if and only if $cyc_i(p)=cyc_i(q)$ for any $i\in\nz^*$.
\end{p}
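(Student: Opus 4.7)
The plan has two directions. For $(\Rightarrow)$, conjugation in $\Pi_{k\to\omega}M_{n_k}$ preserves the traces of powers, and the cycle frequencies can be recovered from them. For any finite permutation $\sigma\in P_m$ one has $Tr(\sigma^n)=m^{-1}\#\{i:\sigma^n(i)=i\}=\sum_{d\mid n}cyc_d(\sigma)$, and taking limits along $\omega$ this identity persists for $p=\Pi_{k\to\omega}p_k$. If $p$ and $q$ are conjugated, then $Tr(p^n)=Tr(q^n)$ for every $n\in\nz^*$, hence $\sum_{d\mid n}cyc_d(p)=\sum_{d\mid n}cyc_d(q)$ for all $n$; M\"obius inversion (equivalently, a one-line induction on $n$) gives $cyc_i(p)=cyc_i(q)$ for every $i\in\nz^*$.

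For $(\Leftarrow)$ the plan is to show that for every $\ve>0$ there exist, on an $\omega$-large set of indices $k$, permutations $u_k\in P_{n_k}$ with $\|u_kp_ku_k^{-1}-q_k\|_2=O(\ve)$. A diagonal reindexing over $\ve_m=1/m$ and nested $\omega$-large sets then assembles these into an ultraproduct element $u$ with $upu^{-1}=q$. The key reduction is that it suffices to exhibit a permutation $\tilde p_k$ with \emph{exactly} the cycle type of $q_k$ and $\|p_k-\tilde p_k\|_2=O(\ve)$: one then has $\tilde p_k=u_k^{-1}q_ku_k$ for some $u_k\in P_{n_k}$, and by unitary invariance of $\|\cdot\|_2$ one gets $\|u_kp_ku_k^{-1}-q_k\|_2=\|p_k-\tilde p_k\|_2$. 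To produce $\tilde p_k$, choose $N$ so large that $\sum_{i\leq N}cyc_i(p)\geq 1-cyc_\infty(p)-\ve^2$ and $N\geq\ve^{-2}$; for $\omega$-almost every $k$ the analogous tail bound holds with $p,q$ replaced by $p_k,q_k$, and $|cyc_i(p_k)-cyc_i(q_k)|<\ve^2/N$ for each $i\leq N$. Now perform surgery in two rounds. In the first, for each $i\leq N$ in turn, equalise the number of length-$i$ cycles of $p_k$ with that of $q_k$, either by splicing a length-$i$ cycle off a long cycle or by absorbing a length-$i$ cycle into a long one; each operation is realised by composing with a single transposition and alters at most four entries of the underlying permutation. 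Writing $m_i$ for the count of length-$i$ cycles, one has $|m_i(p_k)-m_i(q_k)|<\ve^2 n_k/(iN)$, so the total number of operations in this round is $\sum_{i\leq N}\ve^2 n_k/(iN)\leq\ve^2 n_k$. In the second round, match the multiset of long-cycle (length $>N$) lengths by further merges and splits; since $N\geq\ve^{-2}$ each side has at most $n_k/N\leq\ve^2 n_k$ long cycles, and again $O(\ve^2 n_k)$ point modifications suffice.

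The main obstacle is the bookkeeping in the surgery: one must verify that every splice or merge is indeed realised by a small perturbation (the underlying fact being that merging two disjoint cycles of a permutation into a single cycle, or splitting one cycle into two, is achieved by composing with a single transposition, hence changes $O(1)$ entries of the permutation matrix) and that the two rounds compose coherently. The choice $N\geq\ve^{-2}$ is precisely what caps the long-cycle population and keeps the second round affordable even if the first introduced a few extra long cycles. With $\tilde p_k$ differing from $p_k$ at $O(\ve^2 n_k)$ points one concludes $\|p_k-\tilde p_k\|_2\leq\sqrt{2\cdot O(\ve^2 n_k)/n_k}=O(\ve)$, and the diagonal assembly of the $u_k$ to obtain $u\in\Pi_{k\to\omega}P_{n_k}$ with $upu^{-1}=q$ is routine.
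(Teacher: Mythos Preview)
The paper does not actually prove this statement; it is merely quoted as Proposition~2.3(4) of \cite{El-Sz} and used as input for the rest of the article. So there is no ``paper's own proof'' to compare against, and your write-up is a genuine proof attempt for a result the author takes for granted.

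Your forward direction is correct and efficient: conjugation preserves $Tr(\cdot^n)$, the identity $Tr(p^n)=\sum_{d\mid n}cyc_d(p)$ passes to the ultralimit, and M\"obius inversion (which is exactly the content of the paper's later Proposition~\ref{inclusion-exclusion}) recovers each $cyc_i$.

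Your backward direction follows the standard combinatorial strategy and is essentially the Elek--Szab\'o argument: approximate matching of cycle types at each finite stage, followed by a diagonal extraction. The overall architecture is sound and the estimates are of the right order. One point deserves a bit more care than you give it: in Round~1, when you splice a length-$i$ piece off a ``long'' cycle of length $j>N$, the leftover piece has length $j-i$, which may well be $\le N$ and hence pollute the short-cycle counts you have already fixed. The cleanest repair is either to splice only from cycles of length $>2N$ (which exist in abundance on an $\omega$-large set whenever $cyc_\infty(p)>0$, and the case $cyc_\infty(p)=0$ is easier since then almost all mass is already in short cycles), or to replace Round~1 by a direct partial matching: for each $i\le N$ pair off $\min(m_i(p_k),m_i(q_k))$ many $i$-cycles of $p_k$ with $i$-cycles of $q_k$, build the conjugating bijection on that matched set, and observe that the unmatched remainder on each side has at most $O(\ve^2 n_k)$ cycles (the few excess short cycles plus at most $n_k/N\le \ve^2 n_k$ long ones). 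Your Round~2 merge-then-split argument then finishes the job on the remainder exactly as you describe. With that adjustment the proof goes through.
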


\section{A consequence of unique embedability of amenable algebras}

We write down the first question that we have in mind in this paper. Denote by $\unit(A)$ the set of unitaries of a unital $*$-algebra, i.e. $\unit(A)=\{u\in A:u^*u=Id=uu^*\}$. We write $\unit(n)$ instead of $\unit(M_n(\cz))$.

\begin{quest}\label{main question}
When is a unitary $u\in\unit(\Pi_{k\to\omega}M_{n_k})$ conjugated to an element of $\Pi_{k\to\omega}P_{n_k}$?
\end{quest}

The theory of von Neumann algebras immediately provides a partial answer.

\begin{p}\label{conjugated hyperfinite}
Let $N$ be a hyperfinite von Neumann algebra and $\Theta_1,\Theta_2:N\to\Pi_{k\to\omega}M_{n_k}$ be two trace preserving embeddings. Then there exists $u\in\unit(\Pi_{k\to\omega}M_{n_k})$ such that $\Theta_2=Adu\circ\Theta_1$.
\end{p}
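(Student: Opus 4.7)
The plan is to combine the classical finite-dimensional uniqueness of trace-preserving embeddings with the countable saturation of the tracial ultraproduct $M := \Pi_{k\to\omega} M_{n_k}$. This is essentially Connes's uniqueness-of-embedding theorem applied to the present target. First I would exploit hyperfiniteness to reduce to finite-dimensional subalgebras: after passing to a separable subalgebra (treating the non-separable case by a routine transfinite induction), $N$ admits an increasing chain $N_1 \subset N_2 \subset \cdots$ of finite-dimensional $*$-subalgebras whose union is $\|\cdot\|_2$-dense.

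Next I would handle the finite-dimensional case. For each $i$, both $\Theta_1|_{N_i}$ and $\Theta_2|_{N_i}$ are trace-preserving $*$-homomorphisms from the finite direct sum of matrix blocks $N_i$ into the tracial factor $M$. Using the fact that $M$ is a $II_1$ factor and hence contains projections of arbitrary trace in $[0,1]$, a matrix-unit construction shows that such embeddings are classified up to unitary conjugation by the trace of the minimal central projections of $N_i$. Since these values agree for $\Theta_1$ and $\Theta_2$, one obtains a unitary $w_i\in\unit(M)$ with $\Theta_2|_{N_i} = \mathrm{Ad}(w_i)\circ \Theta_1|_{N_i}$.

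Finally I would patch the $w_i$'s together via countable saturation. Enumerating a countable $\|\cdot\|_2$-dense subset $\{x_j\}_j\subset N$, I consider the countable list of conditions $u^*u = uu^* = 1$ together with $\|u\Theta_1(x_j) - \Theta_2(x_j)u\|_2 \leqslant 1/n$ for all $j,n\in\nz^*$. Finite satisfiability of this list follows from the previous step applied to some $N_i$ containing sufficiently good $\|\cdot\|_2$-approximants of the finitely many $x_j$ under consideration, and the countable saturation of the tracial ultraproduct $M$ then produces a single $u\in\unit(M)$ meeting all conditions simultaneously; this forces $\mathrm{Ad}(u)\circ\Theta_1 = \Theta_2$ on the dense set and hence on $N$. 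I expect the main obstacle to be the third step: carefully propagating $\|\cdot\|_2$-errors from the $N_i$-approximants back to the original elements, and verifying the saturation principle in the correct $\|\cdot\|_2$-metric sense rather than in operator norm.
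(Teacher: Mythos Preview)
Your proposal is correct and follows essentially the same outline the paper sketches: settle the finite-dimensional case first, then use hyperfiniteness together with an ultraproduct argument to pass to the limit. The only cosmetic difference is that the paper phrases the third step as a ``diagonal argument'' while you invoke countable saturation of the tracial ultraproduct; these are two packagings of the same technique.
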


This is a classic fact. In order to prove it one has to do it first for finite dimensional algebras, and then use the hyperfine property of $N$ in conjunction with a diagonal argument made available by the presence of the ultraproduct. 

\begin{cor}\label{conjugated}
Let $u,v\in\unit(\Pi_{k\to\omega}M_{n_k})$. Then $u$ and $v$ are conjugated in $\Pi_{k\to\omega}M_{n_k}$ if and only if $Tr(u^k)=Tr(v^k)$ for any $k\in\zz$.
\end{cor}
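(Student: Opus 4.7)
The forward direction is immediate: if $v=wuw^*$ for some $w\in\unit(\Pi_{k\to\omega}M_{n_k})$, then $v^k=wu^kw^*$ for every $k\in\zz$, and the trace $Tr$ is invariant under inner automorphisms.

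For the converse, my plan is to reduce the statement to Proposition \ref{conjugated hyperfinite} by considering the abelian von Neumann subalgebras generated by $u$ and $v$. Let $N_u=W^*(u)$ and $N_v=W^*(v)$ be the von Neumann subalgebras of $\Pi_{k\to\omega}M_{n_k}$ generated by $u$ and $v$ respectively; since $u,v$ are unitaries, these are abelian, hence hyperfinite. Each comes equipped with the restriction of the ambient trace $Tr$.

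The key observation is that the trace on $N_u$ is entirely determined by the numbers $\{Tr(u^k)\}_{k\in\zz}$. Indeed, by the Stone--Weierstrass theorem the Laurent polynomials in $u$ and $u^*$ are uniformly dense in $C^*(u)$, so the trace is determined on $C^*(u)$ by the momentums, and it then extends uniquely to the weak closure $N_u$. Combined with the spectral theorem, this gives a trace-preserving $*$-isomorphism $\alpha:N_u\to N_v$ sending $u$ to $v$ (both are isomorphic to $L^\infty(S^1,\mu)$ for the spectral measure $\mu$ induced by the common momentum sequence).

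Now take $\Theta_1:N_u\hookrightarrow\Pi_{k\to\omega}M_{n_k}$ to be the inclusion and $\Theta_2:N_u\to\Pi_{k\to\omega}M_{n_k}$ to be the inclusion of $N_v$ composed with $\alpha$. These are two trace preserving embeddings of the hyperfinite algebra $N_u$, so Proposition \ref{conjugated hyperfinite} provides $w\in\unit(\Pi_{k\to\omega}M_{n_k})$ with $\Theta_2=Ad\,w\circ\Theta_1$; evaluating at $u$ gives $v=wuw^*$.

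The only nontrivial point is verifying that equal momentums do force $N_u\cong N_v$ as tracial von Neumann algebras; everything else is the combination of the spectral theorem with the already granted Proposition \ref{conjugated hyperfinite}.
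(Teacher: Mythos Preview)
Your proof is correct and follows essentially the same approach as the paper: both arguments form the abelian (hence hyperfinite) von Neumann algebras $W^*(u)$ and $W^*(v)$, observe that the hypothesis $Tr(u^k)=Tr(v^k)$ makes $u\mapsto v$ extend to a trace-preserving $*$-isomorphism between them, and then invoke Proposition~\ref{conjugated hyperfinite}. Your version is slightly more explicit in justifying the isomorphism (Stone--Weierstrass, spectral picture as $L^\infty(S^1,\mu)$) and in stating the forward direction, but the strategy is identical.
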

\begin{proof}
Let $A_u$ and $A_v$ be the von Neumann algebras generated by $u$ and $v$ respectively in $\Pi_{k\to\omega}M_{n_k}$. Then the map $\vp:\{u\}\to\{v\}$ extends to an isomorphism from $A_u$ to $A_v$, as $\vp$ is trace preserving on a dense subset. These algebras are hyperfinite, because they are abelian. So $A_u$ and $A_v$ are two embeddings of the same hyperfinite algebra. By Proposition \ref{conjugated hyperfinite}, these embeddings are conjugated. It follows that $u$ and $v$ are conjugated.
\end{proof}

Coming back to Question \ref{main question}, we get the following characterisation.

\begin{cor}\label{same powers}
An element $u\in\unit(\Pi_{k\to\omega}M_{n_k})$ is conjugated to an ultraproduct of permutations if and only if there exists $p\in\Pi_{k\to\omega}P_{n_k}$ such that $Tr(u^k)=Tr(p^k)$.
\end{cor}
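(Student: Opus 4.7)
The plan is to derive this corollary as an essentially immediate consequence of the preceding Corollary \ref{conjugated}, together with the easy direction that conjugation preserves traces.

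For the forward implication, suppose $u = w p w^*$ for some $w \in \unit(\Pi_{k\to\omega}M_{n_k})$ and $p \in \Pi_{k\to\omega}P_{n_k}$. Then $u^k = w p^k w^*$ for every $k$, so by the trace property $Tr(u^k) = Tr(p^k)$. This produces the required $p$.

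For the backward implication, assume there exists $p \in \Pi_{k\to\omega}P_{n_k}$ satisfying $Tr(u^k) = Tr(p^k)$ for every $k \in \nz^*$. To invoke Corollary \ref{conjugated}, I need the equality for all integer $k$. For $k = 0$ both traces equal $1$. For negative $k$, I would use the fact that both $u$ and $p$ are unitaries, so $u^{-k} = (u^*)^k = (u^k)^*$ and similarly for $p$; since the trace of the adjoint is the complex conjugate of the trace, the equality $Tr(u^k) = Tr(p^k)$ for positive $k$ forces $Tr(u^{-k}) = \overline{Tr(u^k)} = \overline{Tr(p^k)} = Tr(p^{-k})$. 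Hence the hypothesis of Corollary \ref{conjugated} is satisfied, and $u$ is conjugated to $p \in \Pi_{k\to\omega}P_{n_k}$, as required.

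There is no real obstacle here: the content of the corollary lies entirely in the preceding Corollary \ref{conjugated}, and the only bookkeeping step is extending the hypothesis from positive powers to all integer powers by unitarity.
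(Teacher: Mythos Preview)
Your proof is correct and matches the paper's approach: the paper presents this corollary with no explicit proof, treating it as an immediate consequence of Corollary~\ref{conjugated}, which is exactly what you do. Your extra bookkeeping step (extending from positive powers to all integer powers via unitarity) is a harmless clarification of a point the paper leaves implicit.
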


So fix $p\in\Pi_{k\to\omega}P_{n_k}$, and consider the sequence $(Tr(p^k))_k$. What properties does this sequence have, properties that $(Tr(u^k))_k$, for $u\in\unit(\Pi_{k\to\omega}M_{n_k})$ doesn't have in general? Trivially, $Tr(u^k)$ should be a positive real for any $k$. A first non-trivial property is that $Tr(p^2)\geqslant Tr(p)$. This is because $Tr(p^2)=cyc_2(p)+cyc_1(p)=cyc_2(p)+Tr(p)$. One can easily construct $u\in\unit(\Pi_{k\to\omega}M_{n_k})$ with $Tr(u^k)\in\rz_+$ and $Tr(u^2)<Tr(u)$. The equality $Tr(p^3)=cyc_3(p)+Tr(p)$ yields a similar condition, or replace $3$ by any prime number. The next one, $Tr(p^4)=cyc_4(p)+cyc_2(p)+cyc_1(p)$, only gives $Tr(p^4)\geqslant Tr(p^2)$, which is just the first condition for $p^2$ instead of $p$. And so we arrive to the equality $Tr(p^6)=cyc_6(p)+cyc_3(p)+cyc_2(p)+cyc_1(p)$. This can be rewritten as $Tr(p^6)\geqslant Tr(p^3)+Tr(p^2)-Tr(p)$, but as we can see the situation gets messier. This we investigate in the next section.

\section{Describing traces of powers of permutations}\label{traces of powers}

Fix an element $p\in\Pi_{k\to\omega}P_{n_k}$. Remember that $Tr(p^k)=\sum_{i|k}cyc_i(p)$. We want to reverse this system of equations.

\begin{p}\label{inclusion-exclusion}
Let $i\in\nz^*$. If $i=a_1^{r_1}\ldots a_t^{r_t}$ is the prime decomposition of $i$ then:
\[cyc_i(p)=\sum_{(\ve_1,\ldots,\ve_t)\in(0,1)^t}(-1)^{\ve_1+\ldots+\ve_t}Tr(p^{a_1^{r_1-\ve_1}\ldots a_t^{r_t-\ve_t}}).\]
\end{p}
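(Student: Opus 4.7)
The plan is a direct inclusion--exclusion: I would expand each trace on the right-hand side using the formula $Tr(p^m)=\sum_{d\mid m}cyc_d(p)$ recalled in the introduction (this is what we are trying to invert), and then swap the order of summation to read off the coefficient of each $cyc_d(p)$.

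Concretely, fixing $i=a_1^{r_1}\cdots a_t^{r_t}$, I would rewrite the right-hand side as
\[
\sum_{\varepsilon\in\{0,1\}^t}(-1)^{\varepsilon_1+\ldots+\varepsilon_t}\sum_{d\,\mid\, a_1^{r_1-\varepsilon_1}\cdots a_t^{r_t-\varepsilon_t}}cyc_d(p).
\]
Every divisor appearing here is of the form $d=a_1^{s_1}\cdots a_t^{s_t}$ with $0\leqslant s_j\leqslant r_j$, so it is a divisor of $i$ supported on the same primes. Exchanging the two sums, the coefficient of $cyc_d(p)$ becomes
\[
\sum_{\substack{\varepsilon\in\{0,1\}^t \\ s_j\leqslant r_j-\varepsilon_j\ \forall j}}(-1)^{\varepsilon_1+\ldots+\varepsilon_t}.
\]

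The key observation is that the constraint $s_j\leqslant r_j-\varepsilon_j$ allows $\varepsilon_j=0$ unconditionally, and allows $\varepsilon_j=1$ only when $s_j<r_j$. Letting $I=\{j:s_j<r_j\}$, the sum factorises as $\prod_{j\in I}(1+(-1))$ times $1$ for the coordinates outside $I$, which is $0$ whenever $I\neq\es$ and $1$ exactly when $I=\es$, i.e.\ when $d=i$. Hence every $cyc_d(p)$ with $d\neq i$ cancels out, and only $cyc_i(p)$ survives with coefficient $1$.

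There is no real obstacle here: once the double sum is swapped, the argument is standard Möbius-type inclusion--exclusion on the lattice of divisors of $i$. The only thing that needs care is the book-keeping of which $\varepsilon$'s a given divisor $d=\prod a_j^{s_j}$ appears in; everything else is a product of two-element alternating sums.
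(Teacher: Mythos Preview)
Your proof is correct and follows essentially the same approach as the paper: expand each $Tr(p^m)$ as $\sum_{d\mid m}cyc_d(p)$, swap the sums, and compute the coefficient of each $cyc_d(p)$ by noting that the admissible $\varepsilon$'s factor coordinatewise, giving $0$ unless $d=i$. The only cosmetic difference is that the paper phrases the cancellation as a sum over $\{0,1\}^v$ (where $v=|I|$) while you write it as a product $\prod_{j\in I}(1+(-1))$.
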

\begin{proof}
This is an inclusion-exclusion principle. We just verify the equality. On the righthand side we only have terms of the type $cyc_j(p)$ with $j|i$. The term $cyc_i(p)$ only appears once, when $\ve_1=\ve_2=\ldots=\ve_t=0$. We have to verify that everything else cancels out.

Fix $j=a_1^{s_1}\ldots a_t^{s_t}$, a strict divisor of $i$. We can assume, without the loss of generality, that $s_k<r_k$ for $k=1,\ldots,v$ and $s_k=r_k$ for $k=v+1,\ldots,t$. In order for $cyc_j(p)$ to show up in $Tr(p^{a_1^{r_1-\ve_1}\ldots a_t^{r_t-\ve_t}})$, we must have $\ve_{v+1}=\ldots=\ve_t=0$. Then $cyc_j(p)$ appears exactly once for each choice $(\ve_1,\ldots,\ve_v)\in(0,1)^v$. Hence, the coefficient of $cyc_j(p)$ in the righthand side of the equality is:
\[\sum_{(\ve_1,\ldots,\ve_v)\in(0,1)^v}(-1)^{\ve_1+\ldots+\ve_v}=0.\]
For this last equality, it is important that $v>0$, which is true since $j<i$.
\end{proof}

The equation from this last proposition can be used to define numbers $cyc_i(u)$ for any $u\in\unit(\Pi_{k\to\omega}M_{n_k})$. We now show that $u$ is conjugated to an ultraproduct of permutations if and only if these numbers are positive.

\begin{p}\label{exists permutation}
Let $(c_i)_{i\in\nz^*}$ be a sequence of real numbers. Then there exists $p\in\Pi_{k\to\omega}P_{n_k}$ such that $cyc_i(p)=c_i$ for any $i\in\nz^*$ if and only if the following conditions hold:
\begin{enumerate}
\item $c_i\geqslant 0$, for any $i\in\nz^*$;
\item $\sum_{i>0}c_i\leqslant 1$.
\end{enumerate} 
\end{p}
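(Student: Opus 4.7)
The forward direction is immediate from the definition: if $p=\Pi_{k\to\omega}p_k$ with $p_k\in P_{n_k}$, then each $cyc_i(p_k)\geqslant 0$ passes to the limit to give $c_i\geqslant 0$, and for any finite $F\subset\nz^*$ the inequality $\sum_{i\in F}cyc_i(p_k)\leqslant 1$ gives $\sum_{i\in F}c_i\leqslant 1$ in the limit; letting $F$ exhaust $\nz^*$ yields $\sum_{i>0}c_i\leqslant 1$.

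For the converse, my plan is to construct a representative sequence of permutations $p_k\in P_{n_k}$ directly, using a slowly growing truncation parameter $N_k\to\infty$ with $N_k^2/n_k\to 0$ (for example $N_k=\lfloor n_k^{1/3}\rfloor$). For each $k$ and each $i\in\{1,\ldots,N_k\}$ I would set $m_{i,k}=\lfloor n_kc_i/i\rfloor$, so that $im_{i,k}\in(n_kc_i-i,\,n_kc_i]$, and build $p_k$ to contain exactly $m_{i,k}$ disjoint cycles of length $i$ for each such $i$; this uses $S_k:=\sum_{i=1}^{N_k}im_{i,k}\leqslant n_k\sum_ic_i\leqslant n_k$ points, so the residue $R_k:=n_k-S_k$ is non-negative. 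I would dispose of $R_k$ as a single cycle of length $R_k$ when $R_k>N_k$, and as $R_k$ additional fixed points otherwise (doing nothing if $R_k=0$).

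To check correctness, I would verify, for each fixed $i\in\nz^*$, that once $k$ is large enough that $N_k>i$, the dedicated cycles contribute $im_{i,k}/n_k\in(c_i-N_k/n_k,\,c_i]$ to $cyc_i(p_k)$, and that the residue contributes either $0$ (its single cycle has length $R_k>N_k>i$) or at most $R_k/n_k\leqslant N_k/n_k$. In either case the error tends to $0$, so $cyc_i(p_k)\to c_i$ and $p:=\Pi_{k\to\omega}p_k$ satisfies the required identities.

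The main subtlety I expect is the bookkeeping of the residue in the case $\sum_ic_i<1$, where $R_k$ is of order $n_k(1-\sum c_i)$ and a naive partition into short cycles would corrupt the values of $cyc_i$ for low $i$. Placing the entire residue into a single long cycle, whose length is at least $N_k$ and therefore grows without bound, avoids this precisely because $N_k\to\infty$: eventually this cycle is too long to count toward any fixed $cyc_i$, so the mass $1-\sum c_i$ is correctly absorbed into $cyc_\infty(p)$.
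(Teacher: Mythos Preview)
Your proof is correct and follows the same construction as the paper: allocate $\lfloor n_kc_i/i\rfloor$ cycles of length $i$ inside $p_k$ and verify that $cyc_i(p_k)\to c_i$ as $k\to\infty$. The only difference is that you are more explicit than the paper about truncating to $i\leqslant N_k$ and about disposing of the residue $R_k$ (the paper leaves these bookkeeping points implicit); incidentally, your argument as written only uses $N_k/n_k\to 0$, so the stronger hypothesis $N_k^2/n_k\to 0$ is harmless but unnecessary.
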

\begin{proof}
Once you understand how numbers $cyc_i(p)$ work, this is an easy statement. If such an element $p\in\Pi_{k\to\omega}P_{n_k}$ exists then, by definition, $cyc_i(p)\geqslant 0$ and $\sum_{i>0}cyc_i(p)\leqslant 1$.

For the reverse implication, define $n(i,k)=i\cdot\lfloor \frac{c_i\cdot n_k}i\rfloor$. Then $\sum_i n(i,k)\leqslant(\sum_i c_i)\cdot n_k\leqslant n_k$. We construct a permutation $p_k\in P_{n_k}$, having exactly $n(i,k)/i$ cycles of length $i$. Then $cyc_i(p_k)=n(i,k)/n_k$.

Let $p=\Pi_{k\to\omega}p_k$. Then:
\[cyc_i(p)=\lim_{k\to\omega}\frac{i\cdot\lfloor \frac{c_i\cdot n_k}i\rfloor}{n_k}=c_i.\]
The last equality follows from the fact that $\lim_kn_k=\infty$.
\end{proof}

Now we can put everything together to prove the following result.

\begin{te}
Let $u\in\unit(\Pi_{k\to\omega}M_{n_k})$. Construct $c_i=\sum_{(\ve_1,\ldots,\ve_t)\in(0,1)^t}(-1)^{\ve_1+\ldots+\ve_t}Tr(u^{a_1^{r_1-\ve_1}\ldots a_t^{r_t-\ve_t}})$. Then $u$ is conjugated to an element of $\Pi_{k\to\omega}P_{n_k}$ if and only if $c_i\geqslant 0$ for any $i\in\nz^*$.
\end{te}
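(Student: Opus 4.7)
The plan is to glue together the three results already developed in the paper: Corollary \ref{conjugated} (two unitaries are conjugate iff they share all trace moments), Proposition \ref{inclusion-exclusion} (the Möbius-type inversion expressing $cyc_i$ in terms of $Tr(p^k)$), and Proposition \ref{exists permutation} (realisation of any admissible sequence as $cyc_i(p)$ of some ultraproduct of permutations).

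For the forward direction, assume $u$ is conjugated to some $p\in\Pi_{k\to\omega}P_{n_k}$. Then $Tr(u^k)=Tr(p^k)$ for all $k\in\nz^*$, so the inclusion-exclusion formula of Proposition \ref{inclusion-exclusion} applied to $p$ immediately gives $c_i=cyc_i(p)\geqslant 0$.

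For the reverse direction, assume $c_i\geqslant 0$ for every $i$. The first step is to invert Proposition \ref{inclusion-exclusion}: by construction of the $c_i$, a routine Möbius-type check shows that for every $n\in\nz^*$ one has $\sum_{i\mid n}c_i=Tr(u^n)$. In particular, every $Tr(u^n)$ is real (and nonnegative). The second step is to verify the hypothesis $\sum_{i>0}c_i\leqslant 1$ of Proposition \ref{exists permutation}. For this, fix $N\in\nz^*$ and set $n=\mathrm{lcm}(1,\ldots,N)$; then
\[
\sum_{i=1}^{N}c_i \;\leqslant\; \sum_{i\mid n}c_i \;=\; Tr(u^n) \;\leqslant\; \|u^n\|_2^{\,2}\cdot \|Id\|_2 \;\leqslant\; 1,
\]
using $c_i\geqslant 0$ and that the trace of a unitary in a tracial $*$-algebra is bounded in modulus by $1$. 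Letting $N\to\infty$ yields $\sum_{i>0}c_i\leqslant 1$.

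With both hypotheses of Proposition \ref{exists permutation} verified, we obtain $p\in\Pi_{k\to\omega}P_{n_k}$ with $cyc_i(p)=c_i$ for every $i\in\nz^*$. For any $k\in\nz^*$ this gives
\[
Tr(p^k)\;=\;\sum_{i\mid k}cyc_i(p)\;=\;\sum_{i\mid k}c_i\;=\;Tr(u^k),
\]
and the $k<0$ case follows by taking complex conjugates (both $Tr(u^k)$ and $Tr(p^k)$ are real). Corollary \ref{conjugated} then concludes that $u$ and $p$ are conjugated in $\Pi_{k\to\omega}M_{n_k}$.

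The only step that is not pure bookkeeping is the verification of $\sum c_i\leqslant 1$; the mild subtlety is that the positivity of all $c_i$ is what lets one pass from the finitary identity $\sum_{i\mid n}c_i=Tr(u^n)\leqslant 1$ to the infinite sum bound, and this in turn is what makes the hypothesis of Proposition \ref{exists permutation} hold without any extra assumption.
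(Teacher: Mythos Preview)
Your argument is correct and follows essentially the same route as the paper: invert the M\"obius formula to get $Tr(u^n)=\sum_{i\mid n}c_i$, bound the partial sums by $Tr(u^{n})$ for a suitable $n$ divisible by all small indices (you use $\mathrm{lcm}(1,\dots,N)$, the paper uses $i!$---equivalent choices), apply Proposition~\ref{exists permutation}, and conclude via Corollary~\ref{conjugated}. One cosmetic slip: the displayed bound should read $|Tr(u^n)|\leqslant\|u^n\|_2\cdot\|Id\|_2$ rather than $\|u^n\|_2^{2}\cdot\|Id\|_2$, though both evaluate to $1$ so the conclusion is unaffected.
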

\begin{proof}
Our first goal is to prove that $\sum_{i>0}c_i\leqslant 1$, in order to apply Proposition \ref{exists permutation}. One has to reverse Proposition \ref{inclusion-exclusion} to deduce that $Tr(u^i)=\sum_{j|i}c_j$ (define $tr(i)=\sum_{j|i}c_j$, apply Proposition \ref{inclusion-exclusion} to these numbers, and use induction to deduce that $Tr(u^i)=tr(i)$).

Then $\sum_{j\leqslant i}c_j\leqslant Tr(u^{i!})\leqslant 1$. As $\sum_ic_i=\lim_i(\sum_{j\leqslant i}c_j)$, it follows that $\sum_{i>0}c_i\leqslant 1$. Now, we can apply Proposition \ref{exists permutation}, to deduce that there is $p\in\Pi_{k\to\omega}P_{n_k}$ such that $cyc_i(p)=c_i$. Then $Tr(u^i)=Tr(p^i)$ for any $i>0$. Use Corollary \ref{conjugated} to deduce that $u$ and $p$ are conjugated by a unitary in $\Pi_{k\to\omega}M_{n_k}$.
\end{proof}

\section{A measure on the spectrum}\label{measure on spectrum}

As we said, the sequence $\big(Tr(u^k)\big)_k$ hides the existence of a measure on the spectrum of $u$. This object is the equivalent of multiplicity of eigenvalues in the finite dimensional case. As the spectrum of any unitary is a subset of $S^1$, we consider these measures as being defined on $S^1$. It is well known that a Borel measure on the circle is completely determined by its moments.

\begin{te}\label{equal moments}
If $\mu$ and $\lambda$ are two real Borel measures on the unit circle such that $\int_{S^1}z^nd\mu=\int_{S^1}z^nd\lambda$ for every $n\in\nz$, then $\mu=\lambda$.
\end{te}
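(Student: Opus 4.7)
The plan is to reduce this to the classical fact that continuous functions on $S^1$ separate Borel measures, which itself follows from the Riesz representation theorem. The given hypothesis controls only the non-negative moments $\int z^n d\mu$ for $n\in\nz$, so the first task is to upgrade this to \emph{all} integer moments.

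First I would observe that, because $\mu$ and $\lambda$ are real measures, complex conjugation commutes with integration: $\overline{\int_{S^1}fd\mu}=\int_{S^1}\bar{f}d\mu$ for every continuous $f$. On the unit circle $\bar{z}=z^{-1}$, so
\[\int_{S^1}z^{-n}d\mu=\int_{S^1}\bar{z}^nd\mu=\overline{\int_{S^1}z^nd\mu}=\overline{\int_{S^1}z^nd\lambda}=\int_{S^1}z^{-n}d\lambda\]
for every $n\in\nz$. Combined with the hypothesis, this gives $\int_{S^1}z^nd\mu=\int_{S^1}z^nd\lambda$ for every $n\in\zz$.

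Next I would invoke Stone--Weierstrass: the $*$-subalgebra of $C(S^1)$ generated by the function $z\mapsto z$ consists precisely of the trigonometric polynomials $\sum_{n=-N}^{N}a_nz^n$, and it separates points of $S^1$, contains the constants, and is closed under conjugation. Hence it is uniformly dense in $C(S^1)$. By linearity the two measures agree on this dense subspace, and since $\mu-\lambda$ is a finite signed measure (so the functional $f\mapsto\int fd(\mu-\lambda)$ is continuous in the uniform norm), we conclude that $\int_{S^1}fd\mu=\int_{S^1}fd\lambda$ for every $f\in C(S^1)$.

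Finally, the Riesz representation theorem identifies finite real Borel measures on the compact Hausdorff space $S^1$ with continuous linear functionals on $C(S^1)$, so the equality of these two functionals forces $\mu=\lambda$. No step here is a real obstacle; the only point that deserves care is the passage from non-negative to all integer moments, which is exactly where the hypothesis that the measures be real (as opposed to complex) is used.
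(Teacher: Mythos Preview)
Your proof is correct, and it takes a genuinely different and more elementary route than the paper's.

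The paper sets $\theta=\mu-\lambda$ and invokes the F.~and M.~Riesz theorem: since $\int z^n d\theta=0$ for all $n\geqslant 0$, the theorem forces $\theta$ to be absolutely continuous with respect to Lebesgue measure, say $\theta=f\,dz$ with $f\in L^1$ real-valued. Only at this stage does the paper use realness (of $f$) and conjugation to pass to negative moments, concluding via uniqueness of Fourier coefficients in $L^1$. By contrast, you exploit the realness of $\mu$ and $\lambda$ immediately at the measure level to obtain all integer moments, and then finish with Stone--Weierstrass and the Riesz representation theorem. Your argument bypasses the F.~and M.~Riesz theorem entirely, which is a nontrivial result and is in fact overkill here; what you gain is a shorter proof with lighter prerequisites. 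What the paper's route illustrates, perhaps incidentally, is the structural content of having only the analytic half of the Fourier coefficients vanish, but for the statement at hand your approach is cleaner.
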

\begin{proof}
Let $\theta=\mu-\lambda$. Then $\theta$ is a bounded variation real Borel measure. Moreover, its Fourier coefficients are: $\int_{S^1}z^nd\theta=0$ for any $n\in\nz$. By the Theorem of brothers Frigyes Riesz and Marcel Riesz, it follows that $\theta$ is absolutely continuous with respect to the Lebesgue measure, denoted by $dz$ in this proof. It follows that $\theta=f\cdot dz$ for some real valued function $f\in L^1(S^1,dz)$. The integral equality is now $\int_{S^1}fz^ndz=0$ for $n\in\nz$. By conjugation, using the fact that $f$ is real valued, the above equality holds for any $n\in\zz$. As $f\in L^1(S^1,dz)$, by the unicity of Fourier coefficients for functions in $L^1$, it follows that $f=0$.
\end{proof}

We construct our measure using Borel calculus. Let $(M,Tr)$ be a tracial von Neumann algebra and fix $u\in M$ a normal element, i.e. $uu^*=u^*u$. Let $A(u)$ be the von Neumann subalgebra generated by $u$ inside $M$. As $u$ is normal, $A(u)$ is abelian. It follows that $\big(A(u),Tr\big)\simeq L^\infty(\sigma(u),\mu_u)$, such that $u$ corresponds to the function $z\to z$ in $L^\infty(\sigma(u))$.

It is this measure $\mu_u$ that we are interested in. First of all, lets compute its moments. By definition, $\int_{S^1}z^nd\mu_u=Tr(u^n)$, so indeed this is the measure that we are looking for. Combining Corollary \ref{conjugated} and Theorem \ref{equal moments}, we get the following statement.

\begin{p}
Two unitaries $u,v\in\Pi_{k\to\omega}M_{n_k}$ are conjugated if and only if their associated measures are equal.
\end{p}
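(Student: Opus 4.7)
The proposition is essentially the combination of two facts already in hand: Corollary \ref{conjugated}, characterizing conjugacy via equality of traces of all integer powers, and Theorem \ref{equal moments}, which says a real Borel measure on $S^1$ is determined by its nonnegative moments. So the plan is to bridge these two statements through the identity $\int_{S^1}z^n\,d\mu_u=Tr(u^n)$ that was just established via Borel functional calculus.

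For the forward direction, I would observe that if $u$ and $v$ are conjugated, say $v=wuw^*$ for some $w\in\unit(\Pi_{k\to\omega}M_{n_k})$, then $v^n=wu^nw^*$ and the trace on the ultraproduct is conjugation invariant, so $Tr(u^n)=Tr(v^n)$ for every $n\in\nz$. Consequently the moments of $\mu_u$ and $\mu_v$ agree, and since both are positive (hence real) Borel measures on $S^1$, Theorem \ref{equal moments} forces $\mu_u=\mu_v$.

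For the reverse direction, assume $\mu_u=\mu_v$. Then $Tr(u^n)=\int_{S^1}z^n\,d\mu_u=\int_{S^1}z^n\,d\mu_v=Tr(v^n)$ for all $n\in\nz$. The only mildly subtle point is upgrading this to all $n\in\zz$, which Corollary \ref{conjugated} requires. Since $u$ is unitary, $u^{-n}=(u^*)^n$, and therefore $Tr(u^{-n})=\overline{Tr(u^n)}$; the same holds for $v$, so equality of traces on nonnegative powers immediately forces equality on all integer powers. Corollary \ref{conjugated} then yields that $u$ and $v$ are conjugated in $\Pi_{k\to\omega}M_{n_k}$.

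There is no real obstacle here; the proof is a two-line invocation of prior results, and the only thing to be careful about is the bookkeeping between $\nz$-indexed moments and $\zz$-indexed traces, which is handled automatically by the unitarity of $u$ and $v$.
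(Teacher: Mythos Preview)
Your proof is correct and matches the paper's approach exactly: the paper simply states that the proposition follows by combining Corollary \ref{conjugated} with Theorem \ref{equal moments} via the moment identity $\int_{S^1}z^n\,d\mu_u=Tr(u^n)$, and your write-up fills in precisely those details, including the one bookkeeping point about passing from $\nz$-indexed to $\zz$-indexed powers via unitarity.
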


\subsection{The associated measure of an ultraproduct of permutations} In this section we characterise $\mu_p$ for $p\in\Pi_{k\to\omega}P_{n_k}$.

For $z\in S^1$, denote by $\delta_z$, the Dirac measure, i.e. $\delta_z(A)=\chi_A(z)$. For $i\in\nz^*$, define \[\lambda_i=\frac1i\sum_{z^i=1}\delta_z.\]
Then $\int_{S^1}z^nd\lambda_i=1$ iff $i|n$, otherwise the integral is equal to $0$. Denote by $\lambda_\infty$ the probabilistic Lebegue measure on $S^1$. In this case $\int_{S^1}z^nd\lambda_\infty=0$ for $n>0$.

Let $p\in\Pi_{k\to\omega}P_{n_k}$. Construct \[\mu=\sum_{i\in\nz^*\cup\{\infty\}}cyc_i(p)\cdot\lambda_i.\]
Then $\int_{S^1}z^nd\mu=\sum_{i\in\nz^*\cup\{\infty\}}\big(cyc_i(p)\cdot\int_{S^1}z^nd\lambda_i\big)=\sum_{i|n}cyc_i(p)=Tr(p^n)$. By Theorem \ref{equal moments}, it follows that $\mu=\mu_p$. We proved the following theorem.

\begin{te}
A unitary $u\in\Pi_{k\to\omega}M_{n_k}$ is conjugated to an element in $\Pi_{k\to\omega}P_{n_k}$ if and only if its associated measure $\mu_u$ is in the closed convex hull of the set $\big\{\lambda_i:i\in\nz^*\cup\{\infty\}\big\}$.
\end{te}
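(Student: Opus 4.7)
The forward direction is immediate. If $u$ is conjugated to some $p\in\Pi_{k\to\omega}P_{n_k}$, then conjugation preserves the trace, hence every moment $Tr(u^n)=Tr(p^n)$, so Theorem \ref{equal moments} gives $\mu_u=\mu_p$. The calculation preceding the theorem rewrites $\mu_p=\sum_{i\in\nz^*\cup\{\infty\}}cyc_i(p)\lambda_i$, which is a countable convex combination (the coefficients sum to $1$) of elements of $\{\lambda_i\}$, hence belongs to the closed convex hull.

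For the converse, the plan is to show that the closed convex hull of $K:=\{\lambda_i:i\in\nz^*\cup\{\infty\}\}$, taken in the weak$^*$ topology on probability measures of $S^1$, consists precisely of the measures of the form $\sum_{i\in\nz^*\cup\{\infty\}}c_i\lambda_i$ with $c_i\geqslant 0$ and $\sum c_i=1$. The first step is to observe that $K$ is weak$^*$ compact: for each fixed $n>0$ and each $i>n$ one has $\int_{S^1}z^n d\lambda_i=0=\int_{S^1}z^n d\lambda_\infty$, so by Theorem \ref{equal moments} $\lambda_i\to\lambda_\infty$ weak$^*$. Hence $K$ is homeomorphic to the one-point compactification of $\nz^*$.

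Since $K$ is compact and metrizable, every $\mu\in\overline{\mathrm{conv}}(K)$ is a barycenter $\int_K\sigma\, d\nu(\sigma)$ of some Borel probability measure $\nu$ on $K$ (Choquet's theorem). As $K$ is countable, such a $\nu$ must be a discrete measure $\sum c_i\delta_{\lambda_i}$ with $c_i\geqslant 0$ and $\sum c_i=1$, and consequently $\mu=\sum c_i\lambda_i$. Applying this to $\mu_u$ and invoking Proposition \ref{exists permutation} on the sequence $(c_i)_{i\in\nz^*}$ (whose two hypotheses hold, with $c_\infty$ absorbing any slack in the sum) yields $p\in\Pi_{k\to\omega}P_{n_k}$ with $cyc_i(p)=c_i$ for every $i\in\nz^*$, and therefore $cyc_\infty(p)=c_\infty$. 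Then $\mu_p=\mu_u$, and the Proposition immediately preceding the theorem finishes the argument.

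The main obstacle is the structural identification $\overline{\mathrm{conv}}(K)=\{\sum c_i\lambda_i:c_i\geqslant 0,\sum c_i=1\}$. Choquet's theorem makes this a one-liner, but a self-contained route is to approximate $\mu_u$ weak$^*$ by finite convex combinations $\sum_j c_j^{(n)}\lambda_{i_j^{(n)}}$, extract via a diagonal argument coefficients $c_i=\lim_n c_i^{(n)}\in[0,1]$, and use the weak$^*$ convergence $\lambda_i\to\lambda_\infty$ to account for the mass escaping to infinity as the $\lambda_\infty$-coefficient; preservation of total mass is automatic from weak$^*$ convergence applied to the constant function~$1$.
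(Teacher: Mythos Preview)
Your argument is correct and takes a genuinely different route from the paper. The paper works in the \emph{strong} (setwise) topology on measures, takes an approximating sequence $\mu_j=\sum_i c_i^j\lambda_i$ of finite convex combinations, and extracts each limit $c_i=\lim_j c_i^j$ by hand: strong convergence forces convergence of all moments, and the inclusion--exclusion formula (Proposition~\ref{inclusion-exclusion}) expresses $c_i^j$ as a finite linear combination of moments of $\mu_j$, so the limit exists. It then checks that $\mu$ and $\sum_i c_i\lambda_i$ have the same moments and invokes Theorem~\ref{equal moments}. You instead work in the weak$^*$ topology, show that $K=\{\lambda_i\}$ is compact (via $\lambda_i\to\lambda_\infty$), and use the barycenter description of $\overline{\mathrm{conv}}(K)$ together with the countability of $K$ to get the representation $\mu=\sum c_i\lambda_i$ in one stroke. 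Two small remarks: the convergence $\lambda_i\to\lambda_\infty$ follows from Stone--Weierstrass (moments determine integrals of all continuous functions), not from Theorem~\ref{equal moments}, which is a uniqueness statement; and what you invoke is really the elementary fact that for compact $K$ the closed convex hull equals the set of barycenters of probability measures on $K$ (Milman's converse to Krein--Milman suffices), rather than Choquet's theorem proper. Your approach is cleaner and avoids the combinatorics of Proposition~\ref{inclusion-exclusion}; the paper's approach is more self-contained. Note finally that the paper takes its closure in the strong topology, which is a priori smaller than your weak$^*$ closure, but both turn out to equal $\{\sum c_i\lambda_i\}$, so the two versions of the statement coincide.
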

\begin{proof}
We need a proof just for the "closed convex hull" part. The role of this closure is to construct infinite convex combinations, i.e. measures $\sum_ic_i\lambda_i$ with an infinite number of non-zero scalars $c_i$. The strong topology does this job. So $\mu_j\to_j\mu$ if $\mu_j(A)\to_j\mu(A)$ for any Borel set $A$.

Let $M$ be the convex hull of the set $\big\{\lambda_i:i\in\nz^*\cup\{\infty\}\big\}$, that is finite convex combinations. The only non trivial part is to prove that a measure $\mu$ in the closure of this set is still a measure of the type $\sum_ic_i\lambda_i$ with $\sum_ic_i=1$.

Let $\mu_j\in M$ be such that $\mu_j\to_j\mu$ in the strong topology. Let $\mu_j=\sum_ic_i^j\lambda_i$. As we are working only with probabilistic positive measures, it follows that $\int fd\mu_j\to_j\int fd\mu$ for any bounded Borel function $f$. This implies, in particular, convergence of moments. So $\int z^nd\mu=\lim_j\int z^nd\mu_j=\lim_j(\sum_{i|n}c_i^j)$. Using Propostion \ref{inclusion-exclusion} we can deduce that $\lim_jc_i^j$ is a finite sum of terms of the type $\int z^sd\mu$, so it has to exists. Let $c_i=\lim_jc_i^j$. Let $c_\infty=1-\sum_{i\in\nz^*}c_i$. Construct $\mu_1=\sum_{i\in\nz^*\cup\{\infty\}}c_i\lambda_i$. It can be verified that $\mu$ and $\mu_1$ have the same moments, so by Theorem \ref{equal moments} they have to be equal. 
\end{proof}

In the introduction we said that deciding whenever a unitary matrix is conjugated to a permutation one, only requires a look at its eigenvalues with multiplicity. The last theorem is an analogue of this statement for ultraproducts, with spectrum instead of eigenvalues and the associated measure instead of multiplicities.

\section{Application to hyperfinite representations}

Let us recall the notions of hyperfinite and sofic groups.

\begin{de}
A group $G$ is called \emph{hyperfinite} if there exists an injective morphism from $G$ to $\unit(\Pi_{k\to\omega}M_{n_k})$. A group $G$ is called \emph{sofic} if there exists an injective morphism from $G$ to $\Pi_{k\to\omega}P_{n_k}$.
\end{de}

Clearly every sofic group is hyperfinite, while the converse is a major open problem of the theory. For hyperlinear groups we have the following theorem.

\begin{te}(\cite{Ra}, Proposition 2.5)
For a hyperlinear group $G$, there exists a morphism $\phi:G\to\unit(\Pi_{k\to\omega}M_{n_k})$ such that $Tr(\phi(g))=0$ for any $g\neq e$.
\end{te}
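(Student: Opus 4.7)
The plan is to take an arbitrary hyperlinear embedding of $G$ and improve it by amplification until every non-identity element acquires zero trace. Start with an injective morphism $\phi_0 : G \to \unit(\Pi_{k\to\omega} M_{n_k})$ and set $\tau_0(g) = Tr(\phi_0(g))$, a positive-definite function with $\tau_0(e) = 1$ and $|\tau_0(g)| \leqslant 1$. Equality in the latter inequality forces $\phi_0(g) \in S^1 \cdot \id$, and injectivity then ensures the scalar is nontrivial for $g \neq e$. A short calculation shows that $H = \{g \in G : \phi_0(g) \in S^1 \cdot \id\}$ is a central subgroup of $G$ and that $\phi_0$ restricts to an injective character $\chi : H \hookrightarrow S^1$.

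First I would kill the traces of elements outside $H$ by tensor amplification. Since $M_{n_k} \otimes M_{n_k} \simeq M_{n_k^2}$, the ultraproduct $\Pi_{k\to\omega} M_{n_k}$ contains all its finite tensor powers trace-preservingly, and by a diagonal argument it also absorbs iterated ultrapowers of itself. Fixing a second free ultrafilter $\omega'$ on $\nz$, define $\phi^{(1)}(g) = \Pi_{N\to\omega'} \phi_0^{\otimes N}(g)$; then $Tr(\phi^{(1)}(g)) = \lim_{N\to\omega'} \tau_0(g)^N$, which vanishes whenever $|\tau_0(g)| < 1$, that is, for every $g \in G \setminus H$.

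The hard part is to eliminate the remaining unimodular traces on $H$. My plan is to tensor $\phi^{(1)}$ with an auxiliary morphism $\psi : G \to \unit(\Pi_{k\to\omega} M_{n_k})$ such that $Tr(\psi(h)) = 0$ for every $h \in H \setminus \{e\}$; the product $\phi := \phi^{(1)} \otimes \psi$ then has trace zero on all of $G \setminus \{e\}$. Since $H$ is central, every representation of $G$ restricts on $H$ as a direct integral of characters, so the desired $\psi$ must be one whose restriction to $H$ realises the regular representation of $H$ (whose trace is $\delta_e$). Constructing such a $\psi$ on the whole of $G$ inside $\unit(\Pi_{k\to\omega} M_{n_k})$ --- for example via an induced representation from $H$ along a Borel section $G/H \to G$, or by assembling suitable twisted representations of $G/H$ --- is the technical core of the argument and the step where I expect essentially all of the real work to lie; amplification alone cannot neutralise the scalar-image subgroup $H$, and only a structural representation-theoretic input does so.
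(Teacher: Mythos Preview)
The paper does not give its own proof of this theorem; it is quoted from \cite{Ra} and used as a black box, so there is nothing to compare your argument against except the original source.

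Your amplification strategy is the correct idea, and you have correctly isolated the only obstruction: tensor-power amplification annihilates traces of modulus strictly less than $1$ but is powerless against the central subgroup $H$ on which $\phi_0$ lands in scalars. The gap is that you then declare this residual case to be ``the technical core'' and propose heavy machinery (induced representations, twisted representations of $G/H$) without carrying it out. In fact the fix is almost trivial and requires none of that. Before amplifying, replace $\phi_0$ by $\phi_0':=\phi_0\oplus 1$, the block-diagonal sum with the trivial representation, realised for instance via $M_{n_k}\oplus M_{n_k}\hookrightarrow M_{2n_k}$ (the resulting ultraproduct is still canonically isomorphic to $\Pi_{k\to\omega}M_{n_k}$). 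Then
\[
Tr\big(\phi_0'(g)\big)=\tfrac12\big(Tr(\phi_0(g))+1\big),
\]
and for $g\neq e$ one has $|Tr(\phi_0(g))+1|<2$: equality in $|z+1|\leqslant |z|+1\leqslant 2$ for $z$ in the closed unit disc forces $z=1$, hence $\phi_0(g)=1$, hence $g=e$ by injectivity. Thus $|Tr(\phi_0'(g))|<1$ for every $g\neq e$, the obstructing subgroup $H$ for $\phi_0'$ is trivial, and your amplification step alone finishes the proof. This direct-sum-with-trivial trick is exactly what the standard argument (as in \cite{Ra}) uses; no structural representation theory of $G/H$ is needed.
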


It follows that $Tr(\phi(g)^n)\in\{0,1\}$. By Corollary \ref{same powers}, $\phi(g)$ is conjugated to an element in $\Pi_{k\to\omega}P_{n_k}$ for any $g\in G$. Unfortunately, this doesn't mean that all $\phi(g)$ are conjugated at the same time to elements of $\Pi_{k\to\omega}P_{n_k}$, so this Corollary does not imply that every hyperlinear group is sofic. All that we have here is a hyperlinear representation of a group such that, individually, each element in the image is conjugated to a permutation. 

However, there are some questions to be asked here. Define:
\[PP_n=\{u\in\unit(n):u=vpv^*,\mbox{ where }v\in\unit(n) \mbox{ and }p\in P_n\}.\]
What are the subgroups of this set $PP_n$? Of course, $P_n$ and its conjugates are subgroups here, but are there some others? As we saw, any hyperlinear group is a subgroup in $\Pi_{k\to\omega}PP_{n_k}$. Does this observation imply that any hyperlinear group is a subgroup in $\Pi_{k\to\omega}H_k$, where each $H_k$ is a subgroup in $PP_{n_k}$. We present an example to illustrate some aspects of this problem.

\begin{ex}
Let $n$ be an odd number. Denote by $c\in P_n$ the matrix $c(i,j)=\delta_i^{j+1}$, and $d\in D_n$ the matrix $d(i,j)=\delta_i^j\lambda^i$, where $\lambda$ is the first root of unity of order $n$, $\lambda\neq 1$. Then $c$ is a permutation matrix, and $d$ is conjugated to a permutation matrix, so $c,d\in PP_n$. We now prove that also $cd\in PP_n$. By matrix multiplication $cd(i,j)=\delta_i^{j+1}\lambda^{i}$. The characteristic polynomial of $cd$: 
\[det(X\cdot Id-cd)=X^n+(-1)\cdot(-\lambda)\cdot\ldots\cdot(-\lambda^{n-1})=X^n-1.\]

A matrix is in $PP_n$ if and only if its characteristic polynomial is $X^n-1$, so indeed $cd\in PP_n$. 
On the other side $[c,d]=cdc^{-1}d^{-1}=\lambda^{-1}\cdot Id$, so this group is not included in $PP_n$. This fact also implies that there is no $v\in\unit(n)$, such that $c,d\in vP_nv^*$, i.e. $c$ and $d$ are not simultaneously permutation matrices.

From now on, we denote by $c_n$ and $d_n$ the matrices that we constructed, as the dimension will play a role. Fix $(n_k)_k$ an increasing sequence of natural numbers.  Define a morphism $\phi:\zz^2\to\Pi_{k\to\omega}M_{n_k}$ by $\phi(c)=\Pi_{k\to\omega}c_{n_k}$ and $\phi(d)=\Pi_{k\to\omega}d_{n_k}$. The strange thing about this morphism is that it is well-defined, i.e. $\phi(c)$ and $\phi(d)$ commute. This is because $\lambda_n\to_n1$, where $\lambda_n$ is the first root of unity different then $1$.

It is easy to see that $Tr(\phi(z))=0$ for any $z\in\zz$, $z\neq (0,0)$, so $\phi$ is a hyperlinear representation of $\zz^2$. By Propostion \ref{conjugated hyperfinite}, $\phi$ is conjugated to a sofic representation of $\zz^2$. It follows that we can find some representatives $\phi(c)=\Pi_{k\to\omega}c^1_{n_k}$ and $\phi(d)=\Pi_{k\to\omega}d^1_{n_k}$, such that the group generated by $c^1_{n_k}$ and $d^1_{n_k}$ is included in $PP_{n_k}$ for any $k$.
\end{ex}

\begin{ob}
In the above example, if $\lambda_n$ is taken to be a primitive root of unity closer to $-1$, and if this root of unity is used in the definition of $d_n$, then $c$ and $d$ no longer commute. One obtains a hyperlinear representation of the group $G=\zz\ltimes H$, where $\zz$ is generated by $\phi(c)$ and $H$, a subgroup of $\Pi_{k\to\omega}D_{n_k}$, isomorphic to $\zz^2$. This group is still amenable, in fact it is metabelian, so the conclusion of the example still holds.
\end{ob}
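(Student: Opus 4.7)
My plan is to verify the four claims of the observation in sequence: (i) $\phi(c)$ and $\phi(d)$ no longer commute; (ii) the generated group has the announced structure $G=\zz\ltimes H$ with $H\cong\zz^2$ diagonal; (iii) $G$ is metabelian and hence amenable; and (iv) the conclusion of the example carries over.

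Items (i) and (ii) begin with the algebraic identity $c_n d_n = \lambda_n^{-1}\, d_n c_n$, which holds for any primitive $n$-th root $\lambda_n$. Passing to the ultraproduct yields $[\phi(c),\phi(d)] = \mu\cdot Id$ with $\mu = \Pi_{k\to\omega}\lambda_{n_k}^{-1}\in S^1$. With the new choice of $\lambda_{n_k}$ we have $\mu\neq 1$; by selecting the sequence so that $\mu$ is not a root of unity, the central scalar $z := \mu\cdot Id$ has infinite order. Both $\phi(d)$ and $z$ lie in $\Pi_{k\to\omega}D_{n_k}$, and I would check $H := \langle\phi(d),z\rangle\cong\zz^2$ by reading off a hypothetical relation $\phi(d)^a z^b = Id$ diagonal-entry by diagonal-entry: this forces $\lambda_{n_k}^{ai}\mu^b = 1$ for every $i$, which gives first $a=0$ (since otherwise $\lambda_{n_k}^{ai}$ takes many distinct values as $i$ varies, once $n_k\gg|a|$) and then $b=0$ from infinite order of $\mu$.

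For (iii), the relation $\phi(c)\phi(d)\phi(c)^{-1} = \phi(d)\cdot z$ combined with centrality of $z$ exhibits $G = \langle\phi(c)\rangle\ltimes H \cong \zz\ltimes_A\zz^2$ with the unimodular shear $A = \bigl(\begin{smallmatrix}1&0\\1&1\end{smallmatrix}\bigr)$; since $[G,G]\subset H$ is abelian, $G$ is solvable of derived length two and hence amenable. The trace computation $Tr(\phi(c)^m\phi(d)^p z^q) = \mu^q\lim_\omega Tr(c_{n_k}^m d_{n_k}^p)$, coupled with the elementary identity $Tr(c_N^m d_N^p) = 0$ whenever $N\nmid m$, shows that $\phi$ is faithful on $G$ and that its character is trivial on every non-central element.

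For (iv) I would apply Proposition \ref{conjugated hyperfinite} exactly as in the $\zz^2$ example: amenability of $G$ makes the von Neumann algebra $M := \phi(G)''$ hyperfinite, so any two trace-preserving embeddings of $M$ into $\Pi_{k\to\omega}M_{n_k}$ are unitarily equivalent, and one need only supply a second embedding whose unitary image on $\phi(c),\phi(d)$ sits in a permutation-conjugate position. This last supply is the step I expect to be the main obstacle: $Tr(\phi(z^q)) = \mu^q$ is not in $[0,1]$ for generic $q$, while any ultraproduct of elements of $PP_{n_k}$ has trace in $[0,1]$, so no honest sofic model of the abstract group $G$ can realise the same character as $\phi$ on the central scalar. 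Transporting the example's conclusion to the metabelian case therefore requires a more flexible per-level construction at each dimension $n_k$ — one in which the finite-dimensional group generated by the chosen representatives of $\phi(c),\phi(d)$ lives inside $PP_{n_k}$ without arising from a single abstract sofic model of $G$ — and it is precisely the diagonal ultraproduct argument built into the proof of Proposition \ref{conjugated hyperfinite}, applied to the hyperfinite $M$, that makes this realisation possible.
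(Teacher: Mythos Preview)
The paper offers no proof of this observation; it is stated as a remark following the example, so there is nothing to compare your argument against directly. Your analysis of points (i)--(iii) is correct and more careful than anything the paper provides: the commutator identity, the structure $H=\langle\phi(d),z\rangle$, and the metabelian/amenable conclusion all go through as you describe (with the minor caveat that for $H\cong\zz^2$ to hold literally one must arrange $\mu=\lim_\omega\lambda_{n_k}^{-1}$ to avoid roots of unity, which ``closer to $-1$'' does not automatically guarantee; if $\mu=-1$ one gets $\zz\times\zz/2$ instead).

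Your treatment of (iv), however, contains a genuine gap --- and the gap exposes a problem with the observation itself rather than with your reasoning. You correctly isolate the obstruction: $Tr(z)=\mu\notin[0,1]$, while every element of $PP_{n_k}$ has trace in $[0,1]$. You then assert that the diagonal argument inside Proposition~\ref{conjugated hyperfinite} nonetheless ``makes this realisation possible.'' It does not. For \emph{any} representatives $c^1_{n_k},d^1_{n_k}$ with $\Pi_\omega c^1_{n_k}=\phi(c)$ and $\Pi_\omega d^1_{n_k}=\phi(d)$, the commutator $[c^1_{n_k},d^1_{n_k}]$ lies in the group they generate and satisfies
\[
\lim_{k\to\omega} Tr\bigl([c^1_{n_k},d^1_{n_k}]\bigr)=Tr\bigl([\phi(c),\phi(d)]\bigr)=\mu.
\]
If that generated group sat inside $PP_{n_k}$ for $\omega$-many $k$, each of these traces would be real and in $[0,1]$, forcing $\mu\in[0,1]$ --- a contradiction. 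Equivalently: Proposition~\ref{conjugated hyperfinite} only tells you that any two trace-preserving embeddings of the hyperfinite algebra $M=\phi(G)''$ are conjugate, but there is \emph{no} embedding of $M$ sending both generators into $\Pi_\omega P_{n_k}$, for exactly this trace reason, so the proposition has nothing to conjugate $\phi$ to. The literal ``conclusion of the example'' (finding representatives whose generated finite-level group lies in $PP_{n_k}$, or conjugating $\phi$ to a sofic representation) therefore fails in the metabelian case. Your instinct that this step was the main obstacle was right; the resolution you propose does not work, and the last clause of the observation appears to be either in error or intended in a much weaker sense than the example's final sentence.
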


\begin{bibdiv}
\begin{biblist}

\bib{El-Sz}{article}{
 author={Elek, G.}
 author={Szabo, E.},
 title={Hyperlinearity, essentially free actions and $L^2$-invariants. The sofic property},
 journal={Math. Ann.},
 volume={332},
 date={2005},
 number={2},
 pages={421--441},
}

\bib{Pa}{article}{
   author={Paunescu, L.},
   title={All automorphisms of the universal sofic group are class-preserving},
   journal={Rev. Roumaine Math. Pures Appl.},
   volume={59},
   date={2014},
   number={2},
   pages={255-263},
}

\bib{Ra}{article}{
   author={Radulescu, F.},
   title={The von Neumann algebras of the non-residually finite Baumslag group $\langle a,b | ab^3a^{-1}=b^2\rangle$ embeds into $R^\omega$},
   journal={Hot topics in operator theory, Theta Ser. Adv. Math.},
   volume={9},
   date={2008},
   pages={173-185},
}

\end{biblist}
\end{bibdiv}

\end{document}